\numberwithin{equation}{section}
\title{New directions in fixed point theory in $G$-metric spaces and applications to  mappings contracting perimeters of triangles}
\author{Mohamed Jleli\footnote{Department of Mathematics, College of Science, King Saud University, Riyadh 11451, Saudi Arabia; E-mail address: jleli@ksu.edu.sa}, Cristina Maria P\u{a}curar\footnote{Faculty of Mathematics and Computer Science, Transilvania University of Bra\c{s}ov, 50 Iuliu Maniu Blvd., Bra\c{s}ov, Romania; E-mail address: cristina.pacurar@unitbv.ro},  Bessem Samet\footnote{Department of Mathematics, College of Science, King Saud University, Riyadh 11451, Saudi Arabia; E-mail address: bsamet@ksu.edu.sa}}
\date{}
\newtheorem{theorem}{Theorem}[section]
\newtheorem{lemma}[theorem]{Lemma}
\theoremstyle{definition}
\newtheorem{definition}[theorem]{Definition}
\newtheorem{example}[theorem]{Example}
\newtheorem{proposition}[theorem]{Proposition}
\newtheorem{corollary}[theorem]{Corollary}
\theoremstyle{remark}
\newtheorem{remark}[theorem]{Remark}
\DeclareMathOperator{\Fix}{Fix}
\begin{document}

\setlength{\headheight}{13.59999pt}
\addtolength{\topmargin}{-1.59999pt}
\maketitle

\begin{abstract}
We are concerned with the study of fixed points for mappings $T: X\to X$, where $(X,G)$ is a $G$-metric space in the sense of Mustafa and Sims. After the publication of the paper [Journal of Nonlinear and Convex Analysis. 7(2) (2006) 289--297] by  Mustafa and Sims, a great interest was devoted to the study of fixed points in  $G$-metric spaces. In 2012, the first and third authors observed that several fixed point theorems established in $G$-metric spaces are immediate consequences   of known fixed point theorems in standard metric spaces.   This observation demotivated the investigation of fixed points in $G$-metric spaces. In this paper, we open new directions  in fixed point theory in $G$-metric spaces. Namely, we establish new versions of the Banach, Kannan and Reich fixed point theorems in $G$-metric spaces. We point out that the approach used by the first and third authors [Fixed Point Theory Appl. 2012 (2012) 1--7] is inapplicable in the present study.  We also provide some interesting applications related to mappings contracting perimeters of triangles.
\vspace{0.1cm}

\noindent {\bf 2010 Mathematics Subject Classification:} 47H10, 54E50, 47S20.\\
\noindent {\bf Key words and phrases:} Fixed point, $G$-metric, mappings contracting perimeters of triangles. 
\end{abstract}

\section{Introduction}\label{sec1}

The notion of $G$-metric spaces was introduced by Mustafa and Sims in \cite{MU}. After the publication of this work, a great attention was accorded to the study of fixed points in such spaces, see e.g. \cite{AYP,AY,KA,MUS,MU2,RED,SAL,SH} and the references therein.  In \cite{JS} (see also \cite{Samet}), Jleli and Samet remarked that several fixed point theorems  in $G$-metric spaces can be deduced immediately from fixed point theorems in (standard) metric spaces. Namely, it was shown that, if $G: X\times X\times X\to \mathbb{R}^+$ is a $G$-metric on $X$, then the mapping $\delta: X\times X\to \mathbb{R}^+$ defined by $\delta(x,y)=\max\{G(x,x,y),G(y,x,x)\}$ is a metric on $X$. Moreover, if $(X,G)$ is a complete $G$-metric space, then $(X,\delta)$ is a complete metric space. From this observation, it was proved that several contractions studied in complete $G$-metric spaces can be reduced to standard contractions in the complete metric space $(X,\delta)$. For instance, if $(X,G)$ is a complete $G$-metric space and $T: X\to X$ is a mapping satisfying the inequality $G(Tx,Ty,Tz)\leq \lambda G(x,y,z)$ for all $x,y,z\in X$, where $\lambda\in (0,1)$ is a constant, then (see \cite{MUS})  $T$ admits a unique fixed point. Taking $z=y$, the above inequality reduces to $G(Tx,Ty,Ty)\leq \lambda G(x,y,y)$ for all $x,y\in X$. Replacing $(x,y,z)$ by $(y,x,x)$, we obtain $G(Ty,Tx,Tx)\leq \lambda G(y,x,x)$ for all $x,y\in X$. Consequently, we get $\delta(Tx,Ty)\leq \lambda \delta(x,y)$ for all $x,y\in X$, and the Banach fixed point theorem (in metric spaces) applies. After the publications of the papers \cite{JS,Samet}, the attention paid to the study of fixed point theorems in $G$-metric spaces  was attenuated. 

In this paper, we open new directions in fixed point theory  on $G$-metric spaces. Namely, we establish new extensions of Banach, Kannan and Reich fixed point theorems in $G$-metric spaces. We point out that the used approach  in \cite{JS,Samet} cannot be used to establish our obtained results.  Moreover, motivated by the recent paper \cite{Petrov}, we provide 
(among other applications) an application  to the study of fixed points contracting perimeters of triangles. 

The rest of the paper is organized as follow. In Section \ref{sec2}, we briefly recall some basic notions and properties related to $G$-metric spaces. Our main results are stated and proved in Section \ref{sec3}. Finally, some applications are provided in Section \ref{sec4}. 

Throughout this paper, the following notations are used:  $\mathbb{R}^+=[0,\infty)$, $X$ denotes a nonempty set,  $|X|$ denotes the cardinal of $X$, and for a mapping $T: X\to X$, the set of fixed points of $T$ is denoted by $\Fix(T)$.

\section{Preliminaries}\label{sec2}

Let us recall briefly some basic notions related to $G$-metric spaces. For more details, we refer to Mustafa and Sims \cite{MU}. Throughout this paper, $X$ denotes a nonempty set and $\mathbb{R}^+=[0,\infty)$.  

Let $G: X\times X\times X\to \mathbb{R}^+$ be a given mapping. We say that $G$ is a $G$-metric on $X$, if for all $x,y,z,w\in X$, we have
\begin{itemize}
\item[(P$_1$)] $G(x,y,z)=0$ if and only if $x=y=z$;
\item[(P$_2$)] If $x\neq y$, then $G(x,x,y)>0$;
\item[(P$_3$)] 	$G(x,y,z)=G(\sigma(x,y,z))$ for every permutation $\sigma:\{x,y,z\}\to \{x,y,z\}$;
\item[(P$_4$)] If $y\neq z$, then $G(x,x,y)\leq G(x,y,z)$;
\item[(P$_5$)] $G(x,y,z)\leq G(x,w,w)+G(w,y,z)$. 
\end{itemize}
If the above conditions are satisfied, then $(X,G)$ is called a $G$-metric space. 

Prototype examples  of $G$-metrics are
\begin{equation}\label{G-d}
G(x,y,z)=d(x,y)+d(y,z)+d(x,z),\quad x,y,z\in X
\end{equation}
and
\begin{equation}\label{G2-d}
G(x,y,z)=\max\left\{d(x,y),d(y,z),d(x,z)\right\},\quad x,y,z\in X,
\end{equation}
where $d$ is a (standard) metric on $X$. 

Let $\{x_n\}$ be a sequence in $X$ and $x\in X$. We say that $\{x_n\}$ is $G$-convergent to $x$, if $\displaystyle\lim_{n,m\to \infty}G(x_n,x_m,x)=0$. The following assertions are equivalent:
\begin{itemize}
\item[(A$_1$)] $\{x_n\}$ is $G$-convergent to $x$;
\item[(A$_2$)] $\displaystyle\lim_{n\to \infty}G(x_n,x_n,x)=0$;	
\item[(A$_3$)] $\displaystyle\lim_{n\to \infty}G(x_n,x,x)=0$. 
\end{itemize}
Notice that, if $\{x_n\}$ is $G$-convergent to $x$ and $\{x_n\}$ is $G$-convergent to $y$, then $x=y$. This can be easily seen by (P$_5$), (A$_2$) and (A$_3$).

We say that $\{x_n\}$ is $G$-Cauchy, if $\displaystyle\lim_{n,m\to \infty}G(x_n,x_m,x_m)=0$. 

If every  $G$-Cauchy sequence in $X$ is $G$-convergent to an element of $X$, then $(X,G)$ is called a complete $G$-metric space.

The following two lemmas will be useful later. 
\begin{lemma}\label{L2.1}
Let $d$ be a metric on $X$ and $G: X\times X\times X\to \mathbb{R}^+$ be the $G$-metric on $X$ defined by \eqref{G-d}. 	The following assertions are equivalent:
\begin{itemize}
\item[{\rm{(i)}}] $(X,d)$ is a complete metric space;
\item[{\rm{(ii)}}] $(X,G)$ is a complete $G$-metric space.
	\end{itemize}
\end{lemma}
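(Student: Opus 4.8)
The plan is to reduce everything to the elementary identity $G(x,x,y)=2d(x,y)$, which follows directly from \eqref{G-d} since $d(x,x)=0$. Using this together with the symmetry axiom (P$_3$), for any sequence $\{x_n\}$ in $X$ and any $x\in X$ one has $G(x_n,x_m,x_m)=2d(x_n,x_m)$ and $G(x_n,x_n,x)=2d(x_n,x)$. So the whole proof is a matter of translating the $G$-theoretic notions into the corresponding metric notions and observing they coincide up to the harmless factor $2$.

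First I would translate the notion of a $G$-Cauchy sequence. By definition $\{x_n\}$ is $G$-Cauchy iff $\lim_{n,m\to\infty}G(x_n,x_m,x_m)=0$, i.e.\ iff $\lim_{n,m\to\infty}d(x_n,x_m)=0$, which is exactly the statement that $\{x_n\}$ is a Cauchy sequence in $(X,d)$. Next I would translate $G$-convergence: using the equivalence (A$_1$)$\Leftrightarrow$(A$_2$) recalled above, $\{x_n\}$ is $G$-convergent to $x$ iff $\lim_{n\to\infty}G(x_n,x_n,x)=0$, i.e.\ iff $\lim_{n\to\infty}d(x_n,x)=0$, which is convergence of $\{x_n\}$ to $x$ in $(X,d)$. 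In other words, the Cauchy sequences, the convergent sequences, and their limits are literally the same objects in $(X,d)$ and in $(X,G)$.

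With these two dictionaries in hand the equivalence is immediate. Assume (i): if $\{x_n\}$ is $G$-Cauchy, then it is $d$-Cauchy, hence $d$-convergent to some $x\in X$, hence $G$-convergent to $x$; thus $(X,G)$ is complete, proving (ii). Conversely, assume (ii): if $\{x_n\}$ is $d$-Cauchy, then it is $G$-Cauchy, hence $G$-convergent to some $x\in X$, hence $d$-convergent to $x$; thus $(X,d)$ is complete, proving (i). There is essentially no obstacle here; the only point requiring (minimal) care is to invoke the characterization (A$_2$) of $G$-convergence rather than working directly with the two-index limit in its definition, and to note that the single identity $G(x,x,y)=2d(x,y)$ already carries all the information needed, so no appeal to (P$_4$) or (P$_5$) is necessary.
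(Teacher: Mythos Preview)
Your proof is correct and follows essentially the same route as the paper: both arguments hinge on the identity $G(x,x,y)=2d(x,y)$ (equivalently $G(x,y,y)=2d(x,y)$) to match $G$-Cauchy with $d$-Cauchy and $G$-convergence with $d$-convergence. Your presentation is slightly more streamlined in that you isolate this identity once and then read off both equivalences, whereas the paper rederives the relation step by step in each direction; but the mathematical content is identical.
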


\begin{proof}
Assume first that $(X,d)$ is a complete metric space. Let $\{x_n\}\subset X$ be a $G$-Cauchy sequence. For all $n,m$, we have
$$
\begin{aligned}
d(x_n,x_m)& \leq 2 d(x_n,x_m)\\
&= d(x_n,x_m)+d(x_m,x_m)+d(x_n,x_m)\\
&=G(x_n,x_m,x_m). 
\end{aligned}
$$	
Since $\{x_n\}$ is a $G$-Cauchy sequence, we have $\displaystyle\lim_{n,m\to \infty}G(x_n,x_m,x_m)=0$, which implies by the above inequality that $\displaystyle\lim_{n,m\to \infty}d(x_n,x_m)=0$, that is, $\{x_n\}$ is a Cauchy sequence in the metric space $(X,d)$. From the completeness of $(X,d)$, we deduce the existence of $x\in X$ such that 
\begin{equation}\label{lim}
\lim_{n\to \infty}d(x_n,x)=0.
\end{equation}
On the other hand, we have
$$
\begin{aligned}
G(x_n,x,x)&=d(x_n,x)+d(x,x)+d(x_n,x)\\
&=2 d(x_n,x),
\end{aligned}
$$
which implies by \eqref{lim} that $\displaystyle\lim_{n\to \infty}G(x_n,x,x)=0$,
that is, $\{x_n\}$ is $G$-convergent to $x$. This shows that $(X,G)$ is a complete $G$-metric space.

Assume now that $(X,G)$ is a complete $G$-metric space.  Let $\{x_n\}\subset X$ be a Cauchy sequence in the metric space $(X,d)$. From the identity
$$
G(x_n,x_m,x_m)=\frac{1}{2}d(x_n,x_m), 
$$
we deduce that $\displaystyle\lim_{n,m\to \infty}G(x_n,x_m,x_m)=0$, that is, $\{x_n\}$ is $G$-Cauchy. Consequently, there exists $x\in X$ such that 
$$
\lim_{n\to \infty}G(x_n,x,x)=0.
$$
Then, from the identity 
$$
d(x_n,x)=\frac{1}{2}G(x_n,x,x), 
$$
we deduce that $\displaystyle \lim_{n\to \infty}d(x_n,x)=0$, which shows that $(X,d)$ is a complete metric space. 
\end{proof}

\begin{lemma}[see \cite{JS}]\label{L2.2}
Let $d$ be a metric on $X$ and $G: X\times X\times X\to \mathbb{R}^+$ be the $G$-metric on $X$ defined by \eqref{G2-d}. 	The following assertions are equivalent:
\begin{itemize}
\item[{\rm{(i)}}] $(X,d)$ is a complete metric space;
\item[{\rm{(ii)}}] $(X,G)$ is a complete $G$-metric space.
	\end{itemize}
\end{lemma}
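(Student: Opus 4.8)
The plan is to mirror the proof of Lemma \ref{L2.1}, exploiting the fact that for the maximum-type $G$-metric the relevant $G$-values collapse to $d$-values \emph{exactly}. The key observation is that for all $a,b\in X$,
$$
G(a,b,b)=\max\{d(a,b),d(b,b),d(a,b)\}=\max\{d(a,b),0,d(a,b)\}=d(a,b),
$$
using $d(b,b)=0$ and the symmetry of $d$. Thus the map $(a,b)\mapsto G(a,b,b)$ coincides with $d$, and this single identity will drive both implications. Everything else is bookkeeping with the convergence equivalences (A$_1$)--(A$_3$) recalled in Section \ref{sec2}.

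First I would prove (i)$\Rightarrow$(ii). Assume $(X,d)$ is complete and let $\{x_n\}\subset X$ be a $G$-Cauchy sequence, i.e. $\lim_{n,m\to\infty}G(x_n,x_m,x_m)=0$. By the identity above this reads $\lim_{n,m\to\infty}d(x_n,x_m)=0$, so $\{x_n\}$ is a Cauchy sequence in $(X,d)$; by completeness there is $x\in X$ with $\lim_{n\to\infty}d(x_n,x)=0$. Applying the identity once more, $G(x_n,x,x)=d(x_n,x)\to 0$, and by the equivalence (A$_3$)$\Leftrightarrow$(A$_1$) the sequence $\{x_n\}$ is $G$-convergent to $x$. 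Hence every $G$-Cauchy sequence $G$-converges, i.e. $(X,G)$ is complete.

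For (ii)$\Rightarrow$(i) the argument runs in reverse. Assume $(X,G)$ is complete and let $\{x_n\}\subset X$ be Cauchy in $(X,d)$. Since $G(x_n,x_m,x_m)=d(x_n,x_m)$, the sequence is $G$-Cauchy, so there is $x\in X$ with $\lim_{n\to\infty}G(x_n,x,x)=0$; using $G(x_n,x,x)=d(x_n,x)$ we get $\lim_{n\to\infty}d(x_n,x)=0$, so $\{x_n\}$ converges in $(X,d)$. Therefore $(X,d)$ is complete.

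I do not anticipate any genuine obstacle here: unlike a general $G$-metric, the maximum form makes $G(a,b,b)=d(a,b)$ hold on the nose (no factor of $2$, no use of the triangle-type axioms (P$_4$)--(P$_5$) needed), so the proof reduces to a one-line identity plus the standard translation between $G$-Cauchy/$G$-convergence and $d$-Cauchy/$d$-convergence. The only point deserving a moment's care is verifying that the maximum indeed collapses, which is exactly where $d(b,b)=0$ and the symmetry of $d$ enter.
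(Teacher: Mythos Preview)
Your proof is correct. The paper does not actually give its own proof of Lemma~\ref{L2.2}; it simply cites \cite{JS}. Your argument is the natural one and parallels the paper's proof of Lemma~\ref{L2.1}, with the simplification that the identity $G(a,b,b)=d(a,b)$ holds exactly for the maximum-type $G$-metric, so no factors of $2$ appear.
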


\section{Main results}\label{sec3}

\subsection{Banach's fixed point theorem in $G$-metric spaces}

Our first main result is an extension of the Banach fixed point theorem to $G$-metric spaces. 

\begin{theorem}\label{T3.1}
Let $(X,G)$ be a complete $G$-metric space	with $|X|\geq 3$. Let $T: X\to X$ be a mapping satisfying the following conditions:
\begin{itemize}
\item[{\rm{(I)}}] For all $x\in X$, $T(Tx)\neq x$, provided $Tx\neq x$;
\item[{\rm{(II)}}]	There exists $\lambda\in (0,1)$ such that for all  pairwise distinct points $x,y,z\in X$, we have
$$
G(Tx,Ty,Tz)\leq \lambda G(x,y,z).
$$
\end{itemize}
Then $\Fix(T)\neq\emptyset$ and $|\Fix(T)|\leq 2$. 
\end{theorem}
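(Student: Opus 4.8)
The plan is to run a Picard iteration, taking care throughout that hypothesis (II) is available only at \emph{pairwise distinct} triples. Fix any $x_0\in X$ and set $x_{n+1}=Tx_n$ for $n\ge 0$. If $x_{n+1}=x_n$ for some $n$, then $x_n\in\Fix(T)$ and the existence part is already done, so I may assume $Tx_n\ne x_n$ for every $n$. Then (I) applied at $x_n$ gives $x_{n+2}=T(Tx_n)\ne x_n$, and moreover $x_{n+1}\ne x_{n+2}$ (otherwise $x_{n+1}$ would be a fixed point). Hence $x_n,x_{n+1},x_{n+2}$ are pairwise distinct for every $n$, which is exactly what is needed to feed the triple $(x_n,x_{n+1},x_{n+2})$ into (II). (The hypothesis $|X|\ge 3$ simply guarantees that the situation is not vacuous.)

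Next I would set $a_n=G(x_n,x_{n+1},x_{n+2})$. Applying (II) to $(x_n,x_{n+1},x_{n+2})$ gives $a_{n+1}=G(Tx_n,Tx_{n+1},Tx_{n+2})\le\lambda a_n$, hence $a_n\le\lambda^n a_0$. Since $x_0,x_1,x_2$ are pairwise distinct, (P$_1$) yields $a_0>0$, and since no $a_n$ vanishes (again by (P$_1$) and $x_n\ne x_{n+1}$), the sequence $(a_n)$ is strictly decreasing; consequently all iterates $x_n$ are pairwise distinct, because a repetition $x_n=x_m$ with $n<m$ would force the orbit, and hence $(a_n)$, to be eventually periodic, contradicting strict monotonicity. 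For the Cauchy estimate, note first that by (P$_3$) and (P$_4$) (using $x_k\ne x_{k+2}$) one has $G(x_k,x_{k+1},x_{k+1})=G(x_{k+1},x_{k+1},x_k)\le G(x_{k+1},x_k,x_{k+2})=a_k$. Then repeated use of (P$_5$) gives, for $m>n$,
$$G(x_n,x_m,x_m)\le\sum_{k=n}^{m-1}G(x_k,x_{k+1},x_{k+1})\le\sum_{k\ge n}\lambda^k a_0=\frac{\lambda^n}{1-\lambda}\,a_0,$$
which tends to $0$ as $n\to\infty$. Thus $\{x_n\}$ is $G$-Cauchy, and by completeness it is $G$-convergent to some $x^*\in X$.

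The crux — and the step I expect to be the main obstacle — is to show $Tx^*=x^*$, precisely because (II) is unavailable at coincidences. I would argue by contradiction: suppose $Tx^*\ne x^*$. Since the $x_n$ are pairwise distinct, $x_n=x^*$ for at most one index, so for all large $n$ the points $x_n,x_{n+1},x^*$ are pairwise distinct and (II) gives $G(x_{n+1},x_{n+2},Tx^*)\le\lambda\,G(x_n,x_{n+1},x^*)$. The right-hand side tends to $0$: by (P$_5$) and (P$_3$), $G(x_n,x_{n+1},x^*)\le G(x_n,x^*,x^*)+G(x_{n+1},x^*,x^*)$, and both terms go to $0$ by (A$_3$). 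Hence $G(x_{n+1},x_{n+2},Tx^*)\to 0$. Now two applications of (P$_5$) (inserting $w=x_{n+1}$, then $w=x_{n+2}$) together with permutation invariance (P$_3$) give
$$G(x^*,x^*,Tx^*)\le G(x^*,x_{n+1},x_{n+1})+G(x^*,x_{n+2},x_{n+2})+G(x_{n+1},x_{n+2},Tx^*).$$
Letting $n\to\infty$, the first two terms vanish by (A$_2$)/(A$_3$) and the third by the above, so $G(x^*,x^*,Tx^*)=0$; by (P$_1$) this forces $Tx^*=x^*$, a contradiction. Therefore $x^*\in\Fix(T)$, so $\Fix(T)\ne\emptyset$.

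Finally, the bound $|\Fix(T)|\le 2$ is immediate from (II): if $T$ had three distinct fixed points $p,q,r$, then applying (II) to them would give $G(p,q,r)=G(Tp,Tq,Tr)\le\lambda\,G(p,q,r)$ with $G(p,q,r)>0$ by (P$_1$), i.e. $1\le\lambda$, contradicting $\lambda\in(0,1)$.
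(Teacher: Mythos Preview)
Your proof is correct and follows essentially the same route as the paper: Picard iteration, use of (I) to guarantee consecutive triples are pairwise distinct, geometric decay of $G(x_n,x_{n+1},x_{n+2})$ via (II), the bound $G(x_k,x_{k+1},x_{k+1})\le G(x_k,x_{k+1},x_{k+2})$ from (P$_3$)--(P$_4$), the telescoping Cauchy estimate via (P$_5$), and the same two-step rectangle inequality to pin down $Tx^*=x^*$. The only cosmetic differences are that you additionally argue all iterates are pairwise distinct (the paper merely extracts a subsequence with $x_n\ne x^*$, which is all that is needed) and you frame the fixed-point step as a contradiction, though the hypothesis $Tx^*\ne x^*$ is never actually used in your chain of inequalities.
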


\begin{proof}
Let us first prove that $\Fix(T)\neq\emptyset$. Let $u_0\in X$ be fixed and consider the Picard sequence $\{u_n\}\subset X$ defined by 
$$
u_{n+1}=Tu_n,\quad n\geq 0. 
$$
If $u_n=u_{n+1}$ for some $n$, then $u_n\in \Fix(T)$ and the result is proved. So, we may suppose that 
$$
u_n\neq u_{n+1},\quad n\geq 0, 
$$
which implies by (I) that $u_n\neq T(Tu_n)$ ($=u_{n+2}$).  Consequently,  $u_n, u_{n+1}$ and $u_{n+2}$ are pairwise distinct points for every $n\geq 0$. Then, making use of (II), we obtain
$$
\begin{aligned}
& G(u_1,u_2,u_3)=G(Tu_0,Tu_1,Tu_2)	\leq \lambda G(u_0,u_1,u_2),\\
& G(u_2,u_3,u_4)=G(Tu_1,Tu_2,Tu_3)	\leq \lambda G(u_1,u_2,u_3)\leq \lambda^2 G(u_0,u_1,u_2),\\
&\vdots \\
&G(u_n,u_{n+1},u_{n+2})=G(Tu_{n-1},Tu_n,Tu_{n+1})	\leq \lambda G(u_{n-1},u_n,u_{n+1})\leq \lambda^n G(u_0,u_1,u_2),
\end{aligned}
$$
that is,
\begin{equation}\label{S1-T1}
G(u_n,u_{n+1},u_{n+2})\leq \tau_0\lambda^n,\quad n\geq 0,	
\end{equation}
where $\tau_0=G(u_0,u_1,u_2)>0$ (by (P$_1$)). We now show that $\{u_n\}$ is $G$-Cauchy. First, by (P$_3$), (P$_4$) and using that $u_n, u_{n+1}$ and $u_{n+2}$ are pairwise distinct points, we obtain 
$$
\begin{aligned}
G(u_n,u_{n+1},u_{n+1})&=G(u_{n+1},u_{n+1},u_{n})\\
&\leq G(u_{n+1},u_n,u_{n+2})\\
&=G(u_n,u_{n+1},u_{n+2}),
\end{aligned}
$$
which implies by \eqref{S1-T1} that 
\begin{equation}\label{S2-T1}
G(u_n,u_{n+1},u_{n+1})\leq \tau_0\lambda^n,\quad n\geq 0. 	
\end{equation}
We now use \eqref{S2-T1} and (P$_5$) to obtain that for all $n<m$,
$$
\begin{aligned}
G(u_n,u_m,u_m) &\leq G(u_n,u_{n+1},u_{n+1})+G(u_{n+1},u_m,u_m)\\
&\leq 	G(u_n,u_{n+1},u_{n+1})+G(u_{n+1},u_{n+2},u_{n+2})+G(u_{n+2},u_m,u_m)\\
&\vdots\\
&\leq G(u_n,u_{n+1},u_{n+1})+G(u_{n+1},u_{n+2},u_{n+2})+\cdots+G(u_{m-1},u_m,u_m)\\
&\leq \tau_0\left(\lambda^n+\lambda^{n+1}+\cdots +\lambda^{m-1}\right) \\
&=\frac{\tau_0}{1-\lambda}\lambda^n\left(1-\lambda^{m-n}\right)\\
&\leq \frac{\tau_0}{1-\lambda}\lambda^n, 
\end{aligned}
$$
which shows that (since $0<\lambda<1$) 
$$
\lim_{n,m\to \infty} G(u_n,u_m,u_m)=0,
$$
that is, $\{u_n\}$ is a $G$-Cauchy sequence. Now, the completeness of $(X,G)$ yields the existence of an element $u^*\in X$ such that $\{u_n\}$ is $G$-convergent to $u^*$.
On the other hand, if there exists $k$ such that for all $n\geq k$, we have $u_n=u^*$, this contradicts the fact that   $u_n, u_{n+1}$ and $u_{n+2}$ are pairwise distinct points for all $n$. Consequently, we can extract a subsequence $\{u_{n(k)}\}_k$ of $\{u_n\}$ such  that  
$u_{n(k)}\neq u^*$ for all $k$. In order to simplify writing, we always denote the sequence $\{u_{n(k)}\}_k$ by $\{u_n\}$ with $u_n\neq u^*$ for all $n$. We now show that $u^*\in \Fix(T)$. Indeed,   by (P$_3$) and (P$_5$), we have
$$
\begin{aligned}
G(u^*,u^*,Tu^*) &\leq G(u^*,u_n,u_n)+G(u_n,u^*,Tu^*)	\\
&=G(u^*,u_n,u_n)+G(u^*,u_n,Tu^*)\\
&\leq G(u^*,u_n,u_n)+G(u^*,u_{n+1},u_{n+1})+G(u_{n+1},u_n,Tu^*)\\
&= G(u^*,u_n,u_n)+G(u^*,u_{n+1},u_{n+1})+G(Tu_{n},Tu_{n-1},Tu^*),
\end{aligned}
$$
which implies by (II) and the fact that $u_{n},u_{n-1}$ and $u^*$ are pairwise distinct points that 
\begin{equation}\label{S3-T1}
G(u^*,u^*,Tu^*)\leq G(u^*,u_n,u_n)+G(u^*,u_{n+1},u_{n+1})+\lambda G(u_n,u_{n-1},u^*). 
\end{equation}
On the other hand, by  the definition of the $G$-convergence and (A$_2$), we have
$$
\lim_{n\to \infty} \left[G(u^*,u_n,u_n)+G(u^*,u_{n+1},u_{n+1})+\lambda G(u_n,u_{n-1},u^*)\right]=0,
$$
which implies by \eqref{S3-T1} together with (P$_1$) that $G(u^*,u^*,Tu^*)=0$ and $u^*=Tu^*$. This proves that $\Fix(T)\neq \emptyset$.

Suppose now that $v_i$, $i=1,2,3$, are  pairwise distinct fixed points of $T$.   Then, making use of (P$_1$) and (II), we get
$$
0<G(v_1,v_2,v_3)=G(Tv_1,Tv_2,Tv_3)\leq \lambda G(v_1,v_2,v_3)
$$
and we reach a contradiction with $0<\lambda<1$. This shows that $|\Fix(T)|\leq 2$. The proof of Theorem \ref{T3.1} is then completed.
\end{proof}

\begin{remark}
We point out that the condition  $x,y,z \in X$ are pairwise distinct in assumption (II) of Theorem \ref{T3.1} is essential. Otherwise, as we mentioned in Section \ref{sec1}, using the approach in \cite{JS}, (II) reduces to 
$$
\delta(Tx,Ty)\leq \lambda \delta(x,y),\quad x,y\in X,
$$
where $\delta: X\times X\to \mathbb{R}^+$ is the metric on $X$ defined by 
\begin{equation}\label{delt-f}
\delta (x,y)=\max\{G(x,y,y),G(y,x,x)\},\quad x,y\in X,
\end{equation}
and the existence of a (unique) fixed point of $T$ will be an immediate consequence 	of the Banach fixed point theorem in metric spaces. 
\end{remark}

We provide below an example to illustrate our obtained result.

\begin{example}\label{ex3.3}
Let $X=\{A,B,C\}\subset \mathbb{R}^2$, where 
$$
A=\left(\frac{7}{8},\frac{\sqrt{15}}{8}\right),\quad B=(1,0),\quad C=(0,0).
$$
We introduce the mappings $T_1,T_2:X\to X$ defined by 
$$
T_1A=A,\quad T_1B=B,\quad T_1C=A
$$
and
$$
T_2A=B,\quad T_2B=A,\quad T_2C=A.
$$
Consider now the $G$-metric on $X$ given by 
$$
G(u,v,w)=\max\left\{\|u-v\|,\|v-w\|,\|u-w\|\right\},\quad u,v,w\in X,
$$
where $\|\cdot\|$ denotes the Euclidean norm in $\mathbb{R}^2$. 

Clearly, the mapping $T_1$ satisfies condition (I) of Theorem \ref{T3.1}. Furthermore, we have 
$$
\begin{aligned}
G(T_1A,T_1B,T_1C)&=G(A,B,A)\\
&=\|A-B\|\\
&=\frac{1}{2}
\end{aligned}
$$
and
$$
\begin{aligned}
G(A,B,C)&=\max\left\{\|A-B\|,\|B-C\|,\|A-C\|\right\}\\
&=\max\left\{\frac{1}{2}, 1, 1\right\}\\
&=1,
\end{aligned}
$$
which shows that condition (II) of Theorem \ref{T3.1} holds for all $\lambda\in \left[\frac{1}{2},1\right)$. On the other hand, $\Fix(T_1)=\{A,B\}$, which confirms the obtained result given by Theorem \ref{T3.1}. 

Remark also that the mapping $T_2$ satisfies  condition (II) of Theorem \ref{T3.1}. This can be easily seen observing that $G(T_2A,T_2B,T_2C)=\|A-B\|=G(T_1A,T_1B,T_1C)$. On the other hand, we have $T_2A\neq A$ and $T_2(T_2A)=T_2(B)=A$, which shows that condition (I) of Theorem \ref{T3.1} is not satisfied. Furthermore, we have $\Fix(T)=\emptyset$, which shows that in the absence of   condition (I), the result of Theorem \ref{T3.1} is not true.  
\end{example}

\subsection{Kannan's fixed point theorem in $G$-metric spaces}

The Kannan fixed point theorem \cite{KAN} can be stated as follows:  Let $(X,d)$ be a complete metric space and $T: X\to X$ be a mapping satisfying the inequality
$$
d(Tx,Ty)\leq \lambda \left[d(x,Tx)+d(y,Ty)\right]
$$
for every $x,y\in X$, where $\lambda\in \left(0,\frac{1}{2}\right)$ is a constant.  Then $T$ admits a unique fixed point. 

Our second main result is an extension of the above result from metric spaces to $G$-metric spaces. 

\begin{theorem}\label{T3.4}
Let $(X,G)$ be a complete $G$-metric space	with $|X|\geq 3$. Let $T: X\to X$ be a mapping satisfying the following conditions:
\begin{itemize}
\item[{\rm{(I)}}] For all $x\in X$, $T(Tx)\neq x$, provided $Tx\neq x$;
\item[{\rm{(II)}}]	There exists $\lambda\in \left(0,\frac{1}{3}\right)$ such that for all  pairwise distinct points $x,y,z\in X$, we have
$$
G(Tx,Ty,Tz)\leq \lambda \left[G(x,Tx,Tx)+G(y,Ty,Ty)+G(z,Tz,Tz)\right].
$$
\end{itemize}
Then $\Fix(T)\neq\emptyset$ and $|\Fix(T)|\leq 2$. 
\end{theorem}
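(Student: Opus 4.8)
The plan is to run the very same scheme as in the proof of Theorem \ref{T3.1}; the only genuinely new point is to squeeze geometric decay out of the Kannan-type estimate (II), and this is exactly where the bound $\lambda<\frac13$ will be used. I would start from a fixed $u_0\in X$, put $u_{n+1}=Tu_n$, and dispose of the trivial case $u_n=u_{n+1}$ for some $n$. Otherwise $u_n\neq u_{n+1}$ for all $n$, so (I) forces $u_{n+2}=T(Tu_n)\neq u_n$, hence $u_n,u_{n+1},u_{n+2}$ are pairwise distinct for every $n$. Write $a_k=G(u_k,u_{k+1},u_{k+1})$ and $b_k=G(u_k,u_{k+1},u_{k+2})$.

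For the main step, applying (II) to the pairwise distinct triple $(u_{n-1},u_n,u_{n+1})$ and using $Tu_j=u_{j+1}$ gives
$$
b_n\le \lambda\,(a_{n-1}+a_n+a_{n+1}),\qquad n\ge 1 .
$$
Exactly as in Theorem \ref{T3.1}, (P$_3$) with (P$_4$) (auxiliary point $u_{k+2}\neq u_k$) gives $a_k\le b_k$, and the same manipulation with auxiliary point $u_{k-1}\neq u_k$ gives the extra relation $a_k\le b_{k-1}$. Feeding $a_{n-1}\le b_{n-1}$, $a_n\le b_{n-1}$, $a_{n+1}\le b_n$ into the displayed inequality yields $b_n\le \lambda(2b_{n-1}+b_n)$, that is,
$$
b_n\le \frac{2\lambda}{1-\lambda}\,b_{n-1}\qquad(n\ge1),
$$
and $\frac{2\lambda}{1-\lambda}<1$ precisely because $\lambda<\frac13$. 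Hence $a_n\le b_n\le k_0^{\,n}b_0$ with $k_0=\frac{2\lambda}{1-\lambda}\in(0,1)$ and $b_0=G(u_0,u_1,u_2)$. I expect this to be the crux of the argument: one has to notice that each $a_k$ is controlled not only by $b_k$ but also by $b_{k-1}$, which is what pins the threshold at $\frac13$ (rather than the $\frac12$ of the classical Kannan theorem in metric spaces).

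From here everything mirrors Theorem \ref{T3.1}. Using (P$_5$) and telescoping, $G(u_n,u_m,u_m)\le\sum_{j=n}^{m-1}a_j\le \frac{b_0}{1-k_0}k_0^{\,n}$ for $n<m$, so $\{u_n\}$ is $G$-Cauchy and $G$-converges to some $u^*\in X$. Since the triples $u_n,u_{n+1},u_{n+2}$ are pairwise distinct, whenever $u_m=u^*$ one has $u_{m+1},u_{m+2}\neq u^*$; hence in all cases there are infinitely many $n$ with $u_{n-1}\neq u^*$ and $u_n\neq u^*$, and then $u_{n-1},u_n,u^*$ are pairwise distinct. For such $n$, applying (P$_5$) (split at $u_{n+1}$), (P$_3$), (P$_4$) (valid since $u_n\neq u_{n+1}$), (P$_3$) again, and finally (II) to $(u_{n-1},u_n,u^*)$, one gets
$$
(1-\lambda)\,G(u^*,Tu^*,Tu^*)\le G(u^*,u_{n+1},u_{n+1})+\lambda a_{n-1}+\lambda a_n .
$$
Letting $n\to\infty$ along these indices, the right-hand side tends to $0$ by (A$_2$) and the decay of $\{a_n\}$, so $G(u^*,Tu^*,Tu^*)=0$ and $Tu^*=u^*$ by (P$_1$), i.e. $\Fix(T)\neq\emptyset$. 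Lastly, if $v_1,v_2,v_3$ were pairwise distinct fixed points, (II) would give $G(v_1,v_2,v_3)=G(Tv_1,Tv_2,Tv_3)\le \lambda\bigl(G(v_1,v_1,v_1)+G(v_2,v_2,v_2)+G(v_3,v_3,v_3)\bigr)=0$, contradicting (P$_1$); hence $|\Fix(T)|\le 2$.
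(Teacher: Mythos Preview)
Your proof is correct and follows essentially the same scheme as the paper's: Picard iteration, pairwise distinctness of consecutive triples via (I), geometric decay of $b_n=G(u_n,u_{n+1},u_{n+2})$ from (II) combined with (P$_3$)--(P$_4$), $G$-Cauchy via telescoping, and the same endgame. The only cosmetic differences are that the paper bounds the middle term by $a_n\le b_n$ (obtaining the constant $\lambda/(1-2\lambda)$) rather than your $a_n\le b_{n-1}$ (giving $2\lambda/(1-\lambda)$), and that your verification of $Tu^*=u^*$ and of $|\Fix(T)|\le 2$ are slightly more streamlined (you use $G(v_i,v_i,v_i)=0$ directly, whereas the paper takes a small detour through (P$_4$)).
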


\begin{proof}
We first show that $T$ admits at least one fixed point. Let $u_0\in X$ be fixed and consider the Picard sequence $\{u_n\}\subset X$ defined by 
$$
u_{n+1}=Tu_n,\quad n\geq 0. 
$$
As in the proof of Theorem \ref{T3.1}, we may suppose that $u_n\neq u_{n+1}$ for all $n\geq 0$, 
which implies by (I) that   $u_n, u_{n+1}$ and $u_{n+2}$ are pairwise distinct points for every $n\geq 0$. Then, making use of (II), we obtain
\begin{eqnarray*}
G(u_1,u_2,u_3)&=&G(Tu_0,Tu_1,Tu_2)\\
 &\leq & \lambda \left[G(u_0,u_1,u_1)+G(u_1,u_2,u_2)+G(u_2,u_3,u_3)\right],
\end{eqnarray*}
which yields
\begin{equation}\label{SS1}
(1-\lambda) G(u_1,u_2,u_3)\leq 	\lambda \left[G(u_0,u_1,u_1)+G(u_2,u_3,u_3)\right].
\end{equation}
We now use (P$_3$),  (P$_4$), (P$_5$) and the fact that $u_n, u_{n+1}$ and $u_{n+2}$ are pairwise distinct points  to get 
\begin{equation}\label{SS2}
G(u_0,u_1,u_1)=G(u_1,u_1,u_0)\leq G(u_1,u_0,u_2)=G(u_0,u_1,u_2)	
\end{equation}
and
\begin{equation}\label{SS3}
G(u_2,u_3,u_3)=G(u_3,u_3,u_2)\leq G(u_3,u_1,u_2)=G(u_1,u_2,u_3). 
\end{equation}
Then, it follows from \eqref{SS1}, \eqref{SS2} and \eqref{SS3} that 
$$
(1-2\lambda) G(u_1,u_2,u_3)\leq\lambda G(u_0,u_1,u_2),
$$
which implies (since $1-2\lambda>0$) that 
$$
G(u_1,u_2,u_3)\leq k  G(u_0,u_1,u_2),
$$
where $k=\frac{\lambda}{1-2\lambda}$. Notice that by $0<\lambda<\frac{1}{3}$, we have $0<k<1$. Next, by induction, we obtain easily that 
$$
G(u_n,u_{n+1},u_{n+2})\leq k^n \tau_0, \quad n\geq 0,
$$
where $\tau_0=G(u_0,u_1,u_2)>0$. Proceeding as in the proof of Theorem \ref{T3.1}, we obtain that $\{u_n\}$ is a $G$-Cauchy sequence.  Due to the completeness of $(X,G)$, there exists $u^*\in X$ such that $\{u_n\}$ is $G$-convergent to $u^*$.  By the proof of Theorem \ref{T3.1}, without restriction of the generality, we may suppose that $u_n\neq u^*$ for all $n$. We now show that $u^*\in \Fix(T)$. By (P$_3$) and (P$_4$), we have (see the proof of Theorem \ref{T3.1}) 
$$
G(u^*,u^*,Tu^*)\leq G(u^*,u_n,u_n)+G(u^*,u_{n+1},u_{n+1})+G(Tu_{n},Tu_{n-1},Tu^*),
$$
which implies by (II) and the fact that $u_{n},u_{n-1}$ and $u^*$ are pairwise distinct points that 
$$
\begin{aligned}
G(u^*,u^*,Tu^*)&\leq G(u^*,u_n,u_n)+G(u^*,u_{n+1},u_{n+1})\\
&\quad +\lambda \left[G(u_n,u_{n+1},u_{n+1})+G(u_{n-1},u_n,u_n)+G(u^*,Tu^*,Tu^*)\right] \\
&\leq G(u^*,u_n,u_n)+G(u^*,u_{n+1},u_{n+1})\\
&\quad +\lambda \left[G(u_n,u_{n+1},u_{n+1})+G(u_{n-1},u_n,u_n)+2G(u^*,u^*,Tu^*)\right],
\end{aligned}
$$
that is,
$$
\begin{aligned}
(1-2\lambda)G(u^*,u^*,Tu^*)&\leq  G(u^*,u_n,u_n)+G(u^*,u_{n+1},u_{n+1})\\
&\quad +\lambda \left[G(u_n,u_{n+1},u_{n+1})+G(u_{n-1},u_n,u_n)\right].
\end{aligned}
$$
Since $1-2\lambda>0$, passing to the limit as $n\to \infty$ in the above inequality, $G(u^*,u^*,Tu^*)=0$, which yields $u^*=Tu^*$. This shows that $\Fix(T)\neq\emptyset$.  

Suppose now that $v_i$, $i=1,2,3$, are  pairwise distinct fixed points of $T$.   Then, making use of (II), we get
\begin{equation}\label{SSI}
\begin{aligned}
G(v_1,v_2,v_3)&=G(Tv_1,Tv_2,Tv_3)\\
&\leq \lambda\left[G(v_1,Tv_1,Tv_1)+G(v_2,Tv_2,Tv_2)+G(v_3,Tv_3,Tv_3)\right]\\
&=\lambda\left[G(v_1,v_1,v_1)+G(v_2,v_2,v_2)+G(v_3,v_3,v_3)\right].
\end{aligned}
\end{equation}
On the other hand,  we have by (P$_4$) that 
$$
\begin{aligned}
&G(v_1,v_1,v_1)\leq G(v_1,v_1,v_2)\leq G(v_1,v_2,v_3)\\
&G(v_2,v_2,v_2)\leq G(v_2,v_2,v_3)\leq G(v_2,v_3,v_1)=G(v_1,v_2,v_3)\\
&G(v_3,v_3,v_3)\leq G(v_3,v_3,v_2)\leq G(v_3,v_2,v_1)=G(v_1,v_2,v_3).
 \end{aligned}
$$
Then, by \eqref{SSI}, we obtain
$$
(1-3\lambda)G(v_1,v_2,v_3)\leq 0,
$$
and we reach a contradiction with $0<\lambda<\frac{1}{3}$ and $G(v_1,v_2,v_3)>0$. This shows that $|\Fix(T)|\leq 2$ and the proof of Theorem \ref{T3.4} is completed. 
\end{proof}

\begin{remark}
Notice that, if conditions (II) of Theorem \ref{T3.4} holds for every $x,y,z\in X$, then following the approach in \cite{JS}, the existence of  a (unique) fixed point of $T$ can be deduced  	immediately from  \'Ciri\'c's fixed point theorem in metric spaces \cite{Ciric}. Namely, in this case, taking $z=y$, (II) reduces to 
$$
G(Tx,Ty,Ty)\leq \lambda \left[G(x,Tx,Tx)+2G(y,Ty,Ty)\right],\quad x,y\in X.
$$
Replacing  $y$ by $x$ and $z$ by $y$, (II) reduces to 
$$
G(Tx,Tx,Ty)\leq \lambda\left[2G(x,Tx,Tx)+G(y,Ty,Ty)\right],\quad x,y\in X.
$$
The above two  inequalities yield
$$
\delta(Tx,Ty)\leq k\max\left\{\delta(x,Tx),\delta(y,Ty)\right\},
$$
where $k=3\lambda$ and $\delta$ is the metric on $X$ defined by \eqref{delt-f}.
Since $k=3\lambda\in (0,1)$, then by \'Ciri\'c's fixed point theorem, $T$ admits a (unique) fixed point. 
\end{remark}

\begin{example}\label{ex3.5}
Let $X=\left\{a,b,c\right\}\subset \mathbb{R}$, where 
$$
a=0,\quad b=\frac{1}{5},\quad c=1.
$$ 
We introduce the mappings $T_1,T_2:X\to X$ defined by 
$$
T_1a=a,\quad T_1b=b,\quad T_1c=a
$$
and
$$
T_2a=b,\quad T_2b=a,\quad T_2c=a.
$$
Consider now the $G$-metric on $X$ given by 
$$
G(u,v,w)=\max\left\{|u-v|,|v-w|,|u-w|\right\},\quad u,v,w\in X.
$$
Clearly, the mapping $T_1$ satisfies condition (I) of Theorem \ref{T3.4}. Furthermore, we have 
$$
G(T_1a,T_1b,T_1c)=G(a,b,a)=|a-b|=\frac{1}{5}
$$
and
$$
\begin{aligned}
G(a,T_1a,T_1a)+G(b,T_1b,T_1b)+G(c,T_1c,T_1c)&=G(a,a,a)+G(b,b,b)+G(c,a,a)\\
&=G(c,a,a)\\
&=|a-c|=1,
\end{aligned}
$$
which shows that condition (II) of Theorem \ref{T3.4} holds for all $\lambda\in \left(0,\frac{1}{3}\right)$. On the other hand, $\Fix(T_1)=\{a,b\}$, which confirms the obtained result given by Theorem \ref{T3.4}.

Remark also that the mapping $T_2$ satisfies  condition (II) of Theorem \ref{T3.4}. Namely, we have 
$$
G(T_2a,T_2b,T_2c)=G(b,a,a)=|b-a|=\frac{1}{5}
$$
and
$$
\begin{aligned}
G(a,T_2a,T_2a)+G(b,T_2b,T_2b)+G(c,T_2c,T_2c)&=G(a,b,b)+G(b,a,a)+G(c,a,a)\\
&=2|a-b|+|c-a|\\
&=\frac{2}{5}+1=\frac{7}{5},
\end{aligned}
$$
which shows that condition (II) of Theorem \ref{T3.4} holds for every $\lambda\in \left[\frac{1}{7},\frac{1}{3}\right)$. On the other hand, we have $T_2a\neq a$ and $T_2(T_2a)=T_2b=a$, which shows that condition (I) of Theorem \ref{T3.4} is not satisfied. Furthermore, we have $\Fix(T)=\emptyset$, which shows that in the absence of   condition (I), the result of Theorem \ref{T3.4} is not true.  
\end{example}

\subsection{The  Reich fixed point theorem in $G$-metric spaces}

The following fixed point result was obtained by Reich \cite{RE}:  Let $(X,d)$ be a complete metric space and $T: X\to X$ be a mapping satisfying the inequality
$$
d(Tx,Ty)\leq a_1 d(x,Tx)+a_2 d(y,Ty)+a_3 d(x,y)
$$
for every $x,y\in X$, where $a_i$ ($i=1,2,3$) are nonnegative constants with $0<\sum_{i=1}^3a_i<1$. Then $T$ admits a unique fixed point. 

We now extend the Reich fixed point theorem  to $G$-metric spaces.

\begin{theorem}\label{T3.7}
Let $(X,G)$ be a complete $G$-metric space	with $|X|\geq 3$. Let $T: X\to X$ be a mapping satisfying the following conditions:
\begin{itemize}
\item[{\rm{(I)}}] For all $x\in X$, $T(Tx)\neq x$, provided $Tx\neq x$;
\item[{\rm{(II)}}]	There exist nonnegative constants $a_i$ ($i=1,2,3,4$) with $0<\sum_{i=1}^4 a_i<1$  such that for all  pairwise distinct points $x,y,z\in X$, we have
$$
G(Tx,Ty,Tz)\leq a_1G(x,Tx,Tx)+a_2G(y,Ty,Ty)+a_3G(z,Tz,Tz)+a_4G(x,y,z).
$$
\end{itemize}
Then $\Fix(T)\neq\emptyset$ and $|\Fix(T)|\leq 2$. 
\end{theorem}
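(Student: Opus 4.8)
The plan is to adapt the scheme used for Theorems~\ref{T3.1} and \ref{T3.4}. Fix $u_0\in X$ and set $u_{n+1}=Tu_n$. If $u_n=u_{n+1}$ for some $n$ we are done, so assume $u_n\neq u_{n+1}$ for all $n$; then (I) gives $u_n\neq u_{n+2}$, hence $u_n,u_{n+1},u_{n+2}$ are pairwise distinct for every $n\geq 0$. Put $D_n:=G(u_n,u_{n+1},u_{n+2})$. Applying (II) to the pairwise distinct triple $(u_{n-1},u_n,u_{n+1})$ and then bounding, via (P$_3$)--(P$_4$) exactly as in \eqref{SS2}--\eqref{SS3}, the terms $G(u_{n-1},u_n,u_n)$ and $G(u_n,u_{n+1},u_{n+1})$ by $D_{n-1}$ and $G(u_{n+1},u_{n+2},u_{n+2})$ by $D_n$, one gets $(1-a_3)D_n\leq(a_1+a_2+a_4)D_{n-1}$. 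Since $a_3<1$ and $a_1+a_2+a_4<1-a_3$ (both from $0<\sum_{i=1}^4a_i<1$), this yields $D_n\leq k^n\tau_0$ with $k=\frac{a_1+a_2+a_4}{1-a_3}\in[0,1)$ and $\tau_0=G(u_0,u_1,u_2)>0$. The telescoping estimate of Theorem~\ref{T3.1} then shows $\{u_n\}$ is $G$-Cauchy, so by completeness there is $u^*\in X$ with $u_n$ $G$-convergent to $u^*$; arguing as in Theorem~\ref{T3.1}, we may assume $u_n\neq u^*$ for all $n$.

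To prove $u^*\in\Fix(T)$, I would start, as in Theorems~\ref{T3.1} and \ref{T3.4}, from
\[
G(u^*,u^*,Tu^*)\leq G(u^*,u_n,u_n)+G(u^*,u_{n+1},u_{n+1})+G(Tu_n,Tu_{n-1},Tu^*).
\]
The key point for the last term is that (II) is stable under permutations of $(a_1,a_2,a_3)$: since it holds for every pairwise distinct triple, replacing $(x,y,z)$ by a permutation and using the symmetry (P$_3$) of $G$ shows (II) holds with $a_1,a_2,a_3$ rearranged arbitrarily. Hence we may assume $a_1\leq a_2\leq a_3$, and then $3a_1\leq a_1+a_2+a_3<1$, so $a_1<\tfrac12$. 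Now write $G(Tu_n,Tu_{n-1},Tu^*)=G(Tu^*,Tu_n,Tu_{n-1})$ by (P$_3$) and apply (II) with $u^*$ in the first slot (the points $u^*,u_n,u_{n-1}$ being pairwise distinct as in the proof of Theorem~\ref{T3.1}); the resulting term $a_1G(u^*,Tu^*,Tu^*)$ is dominated by $2a_1G(u^*,u^*,Tu^*)$ via the inequality $G(u^*,Tu^*,Tu^*)\leq 2G(u^*,u^*,Tu^*)$ (a consequence of (P$_3$) and (P$_5$) already used in the proof of Theorem~\ref{T3.4}), while the remaining terms $G(u^*,u_n,u_n)$, $G(u^*,u_{n+1},u_{n+1})$, $a_2G(u_n,u_{n+1},u_{n+1})$, $a_3G(u_{n-1},u_n,u_n)$ and $a_4G(u^*,u_n,u_{n-1})$ all tend to $0$ as $n\to\infty$ (by $G$-convergence, (P$_5$), and $D_n\leq k^n\tau_0$). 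Passing to the limit gives $(1-2a_1)G(u^*,u^*,Tu^*)\leq 0$, and since $1-2a_1>0$ this forces $G(u^*,u^*,Tu^*)=0$, i.e. $u^*=Tu^*$, by (P$_1$).

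For $|\Fix(T)|\leq 2$, suppose $v_1,v_2,v_3$ are pairwise distinct fixed points. Applying (II) to $(v_1,v_2,v_3)$ and using $G(v_i,Tv_i,Tv_i)=G(v_i,v_i,v_i)=0$ gives $G(v_1,v_2,v_3)\leq a_4 G(v_1,v_2,v_3)$, i.e. $(1-a_4)G(v_1,v_2,v_3)\leq 0$; since $a_4\leq\sum_{i=1}^4 a_i<1$ and $G(v_1,v_2,v_3)>0$ by (P$_1$), this is impossible.

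The step I expect to be the main obstacle is the estimate of $G(Tu_n,Tu_{n-1},Tu^*)$ in the fixed-point argument: after the unavoidable passage through $G(u^*,Tu^*,Tu^*)\leq 2G(u^*,u^*,Tu^*)$, one needs the coefficient sitting in front of $G(u^*,u^*,Tu^*)$ to be strictly less than $1$, which $0<\sum_{i=1}^4a_i<1$ does not guarantee for an individual $a_i$. The resolution is the symmetrization of (II) in $a_1,a_2,a_3$, which allows one to place $u^*$ in the slot carrying the smallest of these constants — and that smallest one is automatically $<\tfrac13$. The rest is a routine transcription of the computations already carried out for Theorems~\ref{T3.1} and \ref{T3.4}.
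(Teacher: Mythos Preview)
Your proof is correct; the Cauchy estimate (with $k=\frac{a_1+a_2+a_4}{1-a_3}$) and the argument for $|\Fix(T)|\leq 2$ coincide with the paper's. The only substantive difference is in the step $u^*=Tu^*$. The paper works with $G(u^*,Tu^*,Tu^*)$, uses (P$_5$) once to reach $G(Tu_{n-1},Tu^*,Tu^*)$, and then invokes (II) on the triple $(u_{n-1},u^*,u^*)$, obtaining the coefficient $a_2+a_3<1$ directly without any factor of $2$ or symmetrization---but that triple is not pairwise distinct, so hypothesis (II) as stated does not literally apply to it. Your route, which follows the two-step (P$_5$) decomposition of Theorems~\ref{T3.1}--\ref{T3.4} and applies (II) to the genuinely distinct triple $(u^*,u_n,u_{n-1})$, avoids this issue; the permutation-symmetry observation (placing $u^*$ in the slot with coefficient $\min\{a_1,a_2,a_3\}<\tfrac13$) is exactly the extra ingredient needed to keep $1-2a_1>0$. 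A shorter repair in the spirit of the paper is to insert one use of (P$_4$), namely $G(Tu_{n-1},Tu^*,Tu^*)=G(Tu^*,Tu^*,Tu_{n-1})\leq G(Tu^*,Tu_{n-1},Tu_{n-2})$ (valid since $u_{n}\neq u_{n-1}$), and then apply (II) to the distinct triple $(u^*,u_{n-1},u_{n-2})$, which gives the coefficient $a_1<1$ without any symmetrization; either way, your argument stands.
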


\begin{proof}
Let us first show that $T$ admits a fixed point. Let $u_0 \in X$ be fixed, but arbitrarily chosen and consider the Picard sequence $\{u_n\} \subset X$ defined by $$u_{n+1} = Tu_n, \quad n\geq 0$$.

As in the proof of Theorem \ref{T3.1}, we can assume that $u_n \neq u_{n+1}$ for all $n \geq 0$, and thus, by \rm{(I)} we obtain that $u_n, u_{n+1}, u_{n+2}$ are pairwise distinct for all $n \geq 0$. Then, by \rm{(II)}, we have
$$
\begin{aligned}
	G(u_1,u_2,u_3) &= G(Tu_0,Tu_1,Tu_2)\\
	 &\leq  a_1G(u_0,Tu_0,Tu_0)+a_2G(u_1,Tu_1,Tu_1)+a_3G(u_2,Tu_2,Tu_2)+a_4G(u_0,u_1,u_2)
\end{aligned}
$$
so
\begin{equation}
	\begin{aligned}
		G(u_1,u_2,u_3) \leq a_1G(u_0,u_1,u_1)+a_2G(u_1,u_2,u_2)+a_3G(u_2,u_3,u_3)+a_4G(u_0,u_1,u_2)
	\end{aligned}
	\label{3.7eq1}
\end{equation}
By (P$_3$) and (P$_4$), since $u_n, u_{n+1}, u_{n+2}$ are pairwise distinct for all $n \geq 0$ we have
\begin{equation}
	G(u_0,u_1,u_1) \leq G(u_0,u_1,u_2)
	\label{3.7eq2}
\end{equation}
\begin{equation}
	G(u_1,u_2,u_2) \leq G(u_0,u_1,u_2)
	\label{3.7eq3}
\end{equation}
\begin{equation}
	G(u_2,u_3,u_3) \leq G(u_1,u_2,u_3).
	\label{3.7eq4}
\end{equation}
By (\ref{3.7eq1}), (\ref{3.7eq2}), (\ref{3.7eq3}) and (\ref{3.7eq4}) it follows that
$$
\begin{aligned}
	(1-a_3)G(u_1,u_2,u_3) \leq  (a_1+a_2+a_4)G(u_0,u_1,u_2),
\end{aligned}
$$
and since $1-a_3 > 0$ we obtain
$$ G(u_1,u_2,u_3) \leq \dfrac{a_1+a_2+a_4}{1-a_3}G(u_0,u_1,u_2).$$ 
Let $k = \dfrac{a_1+a_2+a_4}{1-a_3}$. Since $\sum_{i=1}^4 a_i<1$ notice that $k <1$ and we have
$$ G(u_1,u_2,u_3) \leq k\tau_0,$$
where $\tau_0=  G(u_0,u_1,u_2) > 0$. By induction, we get 
$$ G(u_n,u_{n+1},u_{n+2}) \leq k^n\tau_0.$$
As in the proof of Theorem \ref{T3.1}, we obtain that $\{u_n\}$ is a G-Cauchy sequence and by completeness of $(X,G)$, there exists $u^* \in X$ such that $\{u_n\}$ is G-convergent to $u^*$. We will show that $u^*\in Fix(T)$. By (P$_5$), we have
$$\begin{aligned}
	G(u^*,Tu^*,Tu^*) &\leq G(u^*,u_n,u_n) + G(u_n, Tu^*, Tu^*)\\
	& = G(u^*,u_n,u_n) + G(Tu_{n-1}, Tu^*, Tu^*),
\end{aligned}$$ 
which implies, by \rm{(II)} and the fact that $u_n$, $u_{n-1}$ and $x^*$ are pairwise distinct that 
$$\begin{aligned}
	G(u^*,Tu^*,Tu^*) &\leq G(u^*,u_n,u_n) + a_1 G(u_{n-1}, Tu_{n-1}, Tu_{n-1}) \\&+ a_2 G(u^*, Tu^*, Tu^*) + a_3 G(u^*, Tu^*, Tu^*) + a_4 G(u_{n-1}, u^*, u^*),
\end{aligned}$$ 
so we have 
$$\begin{aligned}
	[1-(a_2+a_3)]G(u^*,Tu^*,Tu^*) &\leq G(u^*,u_n,u_n) + a_1 G(u_{n-1}, u_{n}, u_{n}) + a_4 G(u_{n-1}, u^*, u^*).
\end{aligned}$$ 
Since $1-(a_2+a_3) > 0$, passing to limit as $n \to \infty$ in the above inequality, we obtain $G(u^*,Tu^*,Tu^*) = 0$, which yields $u^*=Tu^*$, and thus Fix$(T) \neq \emptyset$.

Now suppose that $v_i$, $i =1,2,3$, are pairwise distinct points of $T$. Then, by \rm{(II)} we have 
	$$
	\begin{aligned}
		G(v_1,v_2,v_3) &= G(Tv_1,Tv_2,Tv_3)\\
		&\leq  a_1G(v_1,Tv_1,Tv_1)+a_2G(v_2,Tv_2,Tv_2)+a_3G(v_3,Tv_3,Tv_3)+a_4G(v_1,v_2,v_3)\\
		&= a_1G(v_1,v_1,v_1)+a_2G(v_2,v_2,v_2)+a_3G(v_3,v_3,v_3)+a_4G(v_1,v_2,v_3),
	\end{aligned}
	$$
that is 
	$$
\begin{aligned}
	(1-a_4)G(v_1,v_2,v_3) &\leq  a_1G(v_1,v_1,v_1)+a_2G(v_2,v_2,v_2)+a_3G(v_3,v_3,v_3) \\
	&=0.
\end{aligned}
$$
Thus we obtain $a_4\geq 1$ which is a contradiction. This shows that $|\Fix(T)|\leq 2$ and the proof of Theorem \ref{T3.7} is completed. 

\end{proof}

\begin{remark}
If condition (II) of Theorem \ref{T3.7} holds for every $x,y,z\in X$, then following the approach in \cite{JS}, the existence of  a (unique) fixed point of $T$ can be deduced  	immediately from  \'Ciri\'c's fixed point theorem in metric spaces \cite{Ciric} . Namely, taking $z=y$, (II) reduces to  
$$
\begin{aligned}
G(Tx,Ty,Ty) &\leq a_1G(x,Tx,Tx)+a_2G(y,Ty,Ty)+a_3G(y,Ty,Ty)+a_4G(x,y,y)\\
&\leq a_1\max\{G(x,Tx,Tx), G(x,x,Tx)\}+(a_2+a_3)\max\{G(y,Ty,Ty),G(y,y,Ty)\}\\
&\quad +a_4\max\{G(x,y,y),G(y,x,x)\} 
\end{aligned}
$$
Replacing  $y$ by $x$ and $z$ by $y$, (II) reduces to
$$
\begin{aligned}
G(Tx,Tx,Ty) &\leq a_1G(x,Tx,Tx)+a_2G(x,Tx,Tx)+a_3G(y,Ty,Ty)+a_4G(x,x,y)\\
&\leq (a_1+a_2)\max\{G(x,Tx,Tx), G(x,x,Tx)\}+a_3 \max\{G(y,Ty,Ty),G(y,y,Ty)\}\\
&\quad +a_4\max\{G(x,y,y),G(y,x,x)\}.
\end{aligned}
$$
Then, it follows from the  above two inequalities that 
$$
\delta(Tx,Ty)\leq A \max\left\{\delta(x,Tx),\delta(y,Ty),\delta(x,y)\right\}
$$ 
for every $x,y\in X$, where $A=\sum_{i=1}^4 a_i<1$ and $\delta$ is the metric on $X$ defined by \eqref{delt-f}, and \'Ciri\'c's fixed point theorem applies. 
\end{remark}

\begin{example}
Let $\lambda\in \left(0,\frac{1}{4}\right)$ be fixed. We consider the set $X_\lambda=\left\{a,b,c,d\right\}\subset \mathbb{R}$, where 
$$
a=0,\quad b=\frac{2\lambda}{2\lambda-1},\quad c=1,\quad d=2.
$$
We introduce the mapping $T: X\to X$ defined by 
$$
Ta=a,\quad Tb=b,\quad Tc=b,\quad Td=b. 
$$
Consider now the $G$-metric on $X$ given by 
$$
G(u,v,w)=\max\left\{|u-v|,|v-w|,|u-w|\right\},\quad u,v,w\in X. 
$$
Remark that $T(Tc)=Tb=b\neq c$ and $T(Td)=Tb=b\neq d$, which shows that condition (I) of Theorem \ref{T3.7} is satisfied. 

We now show that condition (II) of Theorem \ref{T3.7} is satisfied for all pairwise distinct points $x,y,z\in X$. By symmetry, we only study the cases 
$$
(x,y,z)\in \left\{(a,b,c), (a,b,d), (a,c,d), (b,c,d)\right\}.
$$ 
$\bullet$ The case $(x,y,z)=(a,b,c)$: In this case, we have
$$
G(Tx,Ty,Tz)=G(Ta,Tb,Tc)=G(a,b,b)=|a-b|=-b
$$
and 
$$
\begin{aligned}
&\lambda \left[G(x,Tx,Tx)+G(y,Ty,Ty)+G(z,Tz,Tz)+G(x,y,z)\right]\\
&=\lambda \left[G(a,Ta,Ta)+G(b,Tb,Tb)+G(c,Tc,Tc)+G(a,b,c)\right]\\
&=\lambda\left[G(a,a,a)+G(b,b,b)+G(c,b,b)+G(a,b,c)\right],
\end{aligned}
$$
that is,
$$
\begin{aligned}
&\lambda \left[G(x,Tx,Tx)+G(y,Ty,Ty)+G(z,Tz,Tz)+G(x,y,z)\right]\\
&=\lambda\left[G(c,b,b)+G(a,b,c)\right]\\
&=\lambda \left[|c-b|+\max\left\{|a-b|,|b-c|,|a-c|\right\}\right]\\
&=\lambda \left[\frac{1}{1-2\lambda}+\max\left\{\frac{2\lambda}{1-2\lambda},\frac{1}{1-2\lambda},1\right\}\right]\\
&=\lambda \left[\frac{1}{1-2\lambda}+\frac{1}{1-2\lambda}\right]\\
&=\frac{2\lambda}{1-2\lambda}\\
&=-b.
\end{aligned}
$$
Consequently, we obtain
$$
G(Tx,Ty,Tz)=\lambda\left[G(x,Tx,Tx)+G(y,Ty,Ty)+G(z,Tz,Tz)+G(x,y,z)\right].
$$
$\bullet$ The case $(x,y,z)=(a,b,d)$: In this case, we have
$$
G(Tx,Ty,Tz)=G(Ta,Tb,Td)=G(a,b,b)=|a-b|=-b
$$
and 
$$
\begin{aligned}
&\lambda \left[G(x,Tx,Tx)+G(y,Ty,Ty)+G(z,Tz,Tz)+G(x,y,z)\right]\\
&=\lambda \left[G(a,Ta,Ta)+G(b,Tb,Tb)+G(d,Td,Td)+G(a,b,d)\right]\\
&=\lambda\left[G(a,a,a)+G(b,b,b)+G(d,b,b)+G(a,b,d)\right],
\end{aligned}
$$
that is,
$$
\begin{aligned}
&\lambda \left[G(x,Tx,Tx)+G(y,Ty,Ty)+G(z,Tz,Tz)+G(x,y,z)\right]\\
&=\lambda\left[G(d,b,b)+G(a,b,d)\right]\\
&=\lambda \left[|d-b|+\max\left\{|a-b|,|b-d|,|a-d|\right\}\right]\\
&=\lambda \left[\frac{2-2\lambda}{1-2\lambda}+\max\left\{\frac{2\lambda}{1-2\lambda},\frac{2-2\lambda}{1-2\lambda},2\right\}\right]\\
&=\lambda \left[\frac{2-2\lambda}{1-2\lambda}+\frac{2-2\lambda}{1-2\lambda}\right]\\
&=\frac{4\lambda(1-\lambda)}{1-2\lambda}\\
&=-2(1-\lambda)b.
\end{aligned}
$$
Since $-2(1-\lambda)b\leq -b$, then it holds that
$$
G(Tx,Ty,Tz)\leq \lambda\left[G(x,Tx,Tx)+G(y,Ty,Ty)+G(z,Tz,Tz)+G(x,y,z)\right].
$$
$\bullet$ The case $(x,y,z)=(a,c,d)$: In this case, we have
$$
G(Tx,Ty,Tz)=G(Ta,Tc,Td)=G(a,b,b)=|a-b|=-b
$$
and 
$$
\begin{aligned}
&\lambda \left[G(x,Tx,Tx)+G(y,Ty,Ty)+G(z,Tz,Tz)+G(x,y,z)\right]\\
&=\lambda \left[G(a,Ta,Ta)+G(c,Tc,Tc)+G(d,Td,Td)+G(a,c,d)\right]\\
&=\lambda\left[G(a,a,a)+G(c,b,b)+G(d,b,b)+G(a,c,d)\right],
\end{aligned}
$$
that is,
$$
\begin{aligned}
&\lambda \left[G(x,Tx,Tx)+G(y,Ty,Ty)+G(z,Tz,Tz)+G(x,y,z)\right]\\
&=\lambda\left[G(c,b,b)+G(d,b,b)+G(a,c,d)\right]\\
&=\lambda \left[|c-b|+|d-b|+\max\left\{|a-c|,|c-d|,|a-d|\right\}\right]\\
&=\lambda \left[\frac{1}{1-2\lambda}+\frac{2-2\lambda}{1-2\lambda}+\max\left\{1,1,2\right\}\right]\\
&=\frac{\lambda(5-6\lambda)}{1-2\lambda}\\
&=-\frac{(5-6\lambda)}{2}b.
\end{aligned}
$$
Since $-b\leq -\frac{(5-6\lambda)}{2}b$, we also obtain 
$$
G(Tx,Ty,Tz)\leq \lambda\left[G(x,Tx,Tx)+G(y,Ty,Ty)+G(z,Tz,Tz)+G(x,y,z)\right].
$$
$\bullet$ The case $(x,y,z)=(b,c,d)$: In this case, we have
$$
G(Tx,Ty,Tz)=G(Tb,Tc,Td)=G(b,b,b)=0,
$$
which implies immediately that 
$$
G(Tx,Ty,Tz)\leq \lambda\left[G(x,Tx,Tx)+G(y,Ty,Ty)+G(z,Tz,Tz)+G(x,y,z)\right].
$$

Consequently, condition (II) of Theorem \ref{T3.7} holds for all pairwise distinct points $x,y,z\in X$ with $a_1=a_2=a_3=a_4=\lambda$. Notice that since $\lambda\in \left(0,\frac{1}{4}\right)$, then $\sum_{i=1}^4 a_i<1$. Furthermore, we have $\Fix(T)=\{a,b\}$, which confirms the result given by Theorem  \ref{T3.7}.
\end{example}

\section{Applications}\label{sec4}

Some applications of our obtained results are provided in this section. 

\subsection{Mappings contracting perimeters of triangles}
Petrov \cite{Petrov} introduced an interesting class of mappings $T:X \to X$, where $(X,d)$ is a metric space, which can be characterized as mappings contracting perimeters of triangles.
	
\begin{definition}[Petrov \cite{Petrov}]
Let $(X,d)$ be a metric space with $|X|\geq 3$. We shall say that $T:X\to X$ is a mapping
contracting perimeters of triangles on $X$, if there exists $\lambda \in (0,1)$ such that the inequality 
\begin{equation}\label{CT-PET}
d(Tx,Ty) + d(Ty,Tz) + d(Tz,Tx) \leq \lambda [d(x,y)+d(y,z)+d(z,x)],
\end{equation}
holds for all three pairwise distinct points $x,y,z \in X$.
\end{definition}

The following result due to Petrov \cite{Petrov} is an immediate consequence of our Theorem \ref{T3.1}. 

\begin{corollary}\label{CR4.2}
Let $(X,d)$, $|X|\geq 3$, be a complete metric space and let the
mapping $T: X \to X$  satisfies the following two conditions:
\begin{itemize}
\item[{\rm{(I)}}] For all $x\in X$, $T(Tx)\neq x$, provided $Tx\neq x$;
\item[{\rm{(II)}}]	$T$ is a mapping contracting perimeters of triangles on $X$.
\end{itemize}
Then $\Fix(T)\neq\emptyset$ and $|\Fix(T)|\leq 2$. 
\end{corollary}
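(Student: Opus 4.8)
The plan is to obtain Corollary \ref{CR4.2} as a direct specialization of Theorem \ref{T3.1}, using the first prototype $G$-metric \eqref{G-d} as the bridge between the metric and the $G$-metric settings. First I would let $G: X\times X\times X\to\mathbb{R}^+$ be defined by
$$
G(x,y,z)=d(x,y)+d(y,z)+d(x,z),\qquad x,y,z\in X.
$$
This is a $G$-metric on $X$ — it is precisely the prototype recalled in \eqref{G-d} — so $(X,G)$ is a $G$-metric space, and $|X|\geq 3$ is inherited verbatim from the hypothesis of the corollary.

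Next I would invoke Lemma \ref{L2.1}: since $(X,d)$ is a complete metric space, $(X,G)$ is a complete $G$-metric space. This places us in the setting required by Theorem \ref{T3.1}, so it remains only to check conditions (I) and (II) of that theorem. Condition (I) of Theorem \ref{T3.1} coincides word for word with condition (I) of the corollary, so there is nothing to do. For condition (II), I would simply observe that, with the above choice of $G$, one has $G(u,v,w)=d(u,v)+d(v,w)+d(w,u)$ for every triple; hence the perimeter-contraction inequality \eqref{CT-PET}, valid for all pairwise distinct $x,y,z\in X$, rewrites literally as
$$
G(Tx,Ty,Tz)\leq \lambda\, G(x,y,z)
$$
for all pairwise distinct $x,y,z\in X$, with the same constant $\lambda\in(0,1)$. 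Thus condition (II) of Theorem \ref{T3.1} holds, and the theorem yields $\Fix(T)\neq\emptyset$ and $|\Fix(T)|\leq 2$, which is the assertion of the corollary.

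I do not expect any genuine obstacle here: the only points deserving a word of care are that \eqref{G-d} indeed defines a $G$-metric (so that Theorem \ref{T3.1} is applicable) and that the cardinality assumption $|X|\geq 3$ transfers unchanged — both are immediate. The entire content of the corollary is already contained in Theorem \ref{T3.1}; the proof merely serves to translate Petrov's metric formulation into the $G$-metric language through \eqref{G-d} and Lemma \ref{L2.1}.
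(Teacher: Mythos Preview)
Your proposal is correct and follows exactly the same route as the paper: you take the prototype $G$-metric \eqref{G-d}, use Lemma \ref{L2.1} to transfer completeness, observe that \eqref{CT-PET} is literally condition (II) of Theorem \ref{T3.1} for this $G$, and then apply that theorem. The paper's proof is identical in substance, only more terse.
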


\begin{proof}
Consider the $G$-metric on $X$ defined by \eqref{G-d}. By Lemma \ref{L2.1}, since $(X,d)$ is a complete metric space, then $(X,G)$ is a complete $G$-metric space. Furthermore, \eqref{CT-PET} is equivalent to 
$$
G(Tx,Ty,Tz)\leq \lambda G(x,y,z)
$$	
for all three pairwise distinct points $x,y,z \in X$. Then, applying Theorem \ref{T3.1}, the desired result follows. 
\end{proof}

Further  results related to mappings contracting perimeters of triangles can be found in \cite{PA,Petrov2,PO1}.

\subsection{Further applications}

Further interesting consequences can be deduced from our obtained results. Let $(X,d)$ be a metric space with $|X|\geq 3$. We introduce the class of mappings $T:X\to X$ satisfying 
the inequality
\begin{equation}\label{Class1}
\max\left\{d(Tx,Ty),d(Ty,Tz),d(Tz,Tx)\right\}\leq \lambda \max\left\{d(x,y),d(y,z),d(x,z)\right\}
\end{equation}
for all three pairwise distinct points $x,y,z \in X$, where $\lambda\in (0,1)$ is a constant. 

\begin{corollary}\label{CR4.3}
Let $(X,d)$, $|X|\geq 3$, be a complete metric space and let the
mapping $T: X \to X$  satisfies the following two conditions:
\begin{itemize}
\item[{\rm{(I)}}] For all $x\in X$, $T(Tx)\neq x$, provided $Tx\neq x$;
\item[{\rm{(II)}}]	\eqref{Class1} holds for all three pairwise distinct points $x,y,z \in X$.
\end{itemize}
Then $\Fix(T)\neq\emptyset$ and $|\Fix(T)|\leq 2$. 
\end{corollary}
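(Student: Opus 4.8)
The plan is to reduce Corollary~\ref{CR4.3} to Theorem~\ref{T3.1} by choosing the ``max'' $G$-metric rather than the ``sum'' one. First I would consider the $G$-metric on $X$ defined by \eqref{G2-d}, that is,
$$
G(u,v,w)=\max\left\{d(u,v),d(v,w),d(u,w)\right\},\quad u,v,w\in X.
$$
By Lemma~\ref{L2.2}, since $(X,d)$ is a complete metric space, $(X,G)$ is a complete $G$-metric space, and since $|X|\geq 3$ the hypothesis of Theorem~\ref{T3.1} on the cardinality is met. Condition (I) of Corollary~\ref{CR4.3} is literally condition (I) of Theorem~\ref{T3.1}, so nothing needs to be done there.

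Next I would observe that, with this choice of $G$, the contraction inequality \eqref{Class1} is exactly
$$
G(Tx,Ty,Tz)\leq \lambda\, G(x,y,z)
$$
for all pairwise distinct $x,y,z\in X$, which is precisely condition (II) of Theorem~\ref{T3.1} with the same constant $\lambda\in(0,1)$. Thus both hypotheses of Theorem~\ref{T3.1} hold, and applying that theorem yields $\Fix(T)\neq\emptyset$ and $|\Fix(T)|\leq 2$, which is the desired conclusion.

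There is essentially no obstacle here: the entire content is the identification of the left- and right-hand sides of \eqref{Class1} with $G$-metric expressions, which is immediate from the definition \eqref{G2-d}, together with the invocation of Lemma~\ref{L2.2} to transfer completeness. The only point that warrants a word of care is making sure that Lemma~\ref{L2.2} is being used in the correct direction, namely (i)$\Rightarrow$(ii), and that the definition of $G$ used in \eqref{Class1} matches \eqref{G2-d} verbatim (it does, up to renaming variables). Consequently the proof is a two-line reduction and I would present it in the same compact style as the proof of Corollary~\ref{CR4.2}.
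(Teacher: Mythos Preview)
Your proposal is correct and matches the paper's proof essentially line for line: the paper also takes the $G$-metric from \eqref{G2-d}, invokes Lemma~\ref{L2.2} to transfer completeness, rewrites \eqref{Class1} as $G(Tx,Ty,Tz)\leq\lambda\,G(x,y,z)$, and applies Theorem~\ref{T3.1}.
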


\begin{proof}
Consider the $G$-metric on $X$ defined by \eqref{G2-d}.   By Lemma \ref{L2.2}, since $(X,d)$ is a complete metric space, then $(X,G)$ is a complete $G$-metric space. Furthermore, \eqref{Class1} is equivalent to 
$$
G(Tx,Ty,Tz)\leq \lambda G(x,y,z)
$$	
for all three pairwise distinct points $x,y,z \in X$. 	Then, applying Theorem \ref{T3.1}, the desired result follows. 
\end{proof}

We now introduce the class of mappings $T:X\to X$ satisfying the inequality
\begin{equation}\label{CONT-APP3}
d(Tx,Ty)+d(Ty,Tz)+d(Tx,Tz)\leq \lambda \left[d(x,Tx)+d(y,Ty)+d(z,Tz)\right]
\end{equation}
for all three pairwise distinct points $x,y,z \in X$, where $\lambda\in \left(0,\frac{1}{3}\right)$ is a constant.

\begin{corollary}\label{CR4.4}
Let $(X,d)$, $|X|\geq 3$, be a complete metric space and let the
mapping $T: X \to X$  satisfies the following two conditions:
\begin{itemize}
\item[{\rm{(I)}}] For all $x\in X$, $T(Tx)\neq x$, provided $Tx\neq x$;
\item[{\rm{(II)}}]	\eqref{CONT-APP3} holds for all three pairwise distinct points $x,y,z \in X$.
\end{itemize}
Then $\Fix(T)\neq\emptyset$ and $|\Fix(T)|\leq 2$. 
\end{corollary}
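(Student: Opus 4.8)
\emph{Proof proposal.} The plan is to deduce Corollary~\ref{CR4.4} from Theorem~\ref{T3.4} by the same device already used for Corollaries~\ref{CR4.2} and~\ref{CR4.3}: equip $X$ with the $G$-metric $G$ from \eqref{G-d}, namely $G(x,y,z)=d(x,y)+d(y,z)+d(x,z)$. First I would invoke Lemma~\ref{L2.1}: since $(X,d)$ is a complete metric space, $(X,G)$ is a complete $G$-metric space, and of course it still satisfies $|X|\geq 3$. Condition (I) of Corollary~\ref{CR4.4} is verbatim condition (I) of Theorem~\ref{T3.4}, so nothing has to be done for it.

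The only genuine step is to rewrite \eqref{CONT-APP3} in the shape required by condition (II) of Theorem~\ref{T3.4}. For this particular $G$ one has, for all $x,y,z\in X$,
$$
G(Tx,Ty,Tz)=d(Tx,Ty)+d(Ty,Tz)+d(Tx,Tz),\qquad G(w,Tw,Tw)=2\,d(w,Tw)\ \ (w\in X),
$$
the latter because $d(Tw,Tw)=0$. Hence the right-hand side of \eqref{CONT-APP3} equals $\tfrac{\lambda}{2}\bigl[G(x,Tx,Tx)+G(y,Ty,Ty)+G(z,Tz,Tz)\bigr]$, so \eqref{CONT-APP3} is precisely
$$
G(Tx,Ty,Tz)\leq \mu\bigl[G(x,Tx,Tx)+G(y,Ty,Ty)+G(z,Tz,Tz)\bigr]
$$
for all pairwise distinct $x,y,z\in X$, with $\mu=\lambda/2$. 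Since $\lambda\in\bigl(0,\tfrac13\bigr)$ we get $\mu\in\bigl(0,\tfrac16\bigr)\subset\bigl(0,\tfrac13\bigr)$, so condition (II) of Theorem~\ref{T3.4} holds with constant $\mu$.

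With both hypotheses of Theorem~\ref{T3.4} verified, that theorem yields $\Fix(T)\neq\emptyset$ and $|\Fix(T)|\leq 2$; and since the set of fixed points of $T$ does not depend on which metric structure one puts on $X$, this is exactly the conclusion of Corollary~\ref{CR4.4}. I do not anticipate any real obstacle: the argument is a direct specialization, and the only point to watch is the harmless factor $2$ coming from $G(w,Tw,Tw)=2d(w,Tw)$, which keeps the contraction constant comfortably inside the admissible range. (The same reasoning would in fact go through under the weaker assumption $\lambda\in(0,\tfrac23)$, but I will state and prove it for the range $\lambda\in(0,\tfrac13)$ as given.)
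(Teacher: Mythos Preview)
Your proposal is correct and follows essentially the same route as the paper: equip $X$ with the $G$-metric \eqref{G-d}, use Lemma~\ref{L2.1} for completeness, rewrite \eqref{CONT-APP3} as the Kannan-type inequality of Theorem~\ref{T3.4} with constant $\lambda/2$, and apply that theorem. Your extra remark that the argument actually works for $\lambda\in(0,\tfrac{2}{3})$ is a valid sharpening not noted in the paper.
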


\begin{proof}
Consider the $G$-metric defined by \eqref{G-d}. By Lemma \ref{L2.1}, since $(X,d)$ is a complete metric space, then $(X,G)$ is a complete $G$-metric space. Moreover, (\ref{CONT-APP3}) is equivalent to 
$$
G(Tx,Ty,Tz)\leq 
\dfrac{\lambda}{2} [G(x,Tx,Tx)+ G(y,Ty,Ty)+G(z,Tz,Tz)]
$$	
for all three pairwise distinct points $x,y,z \in X$. 	Then, since $\lambda< \dfrac13$ applying Theorem \ref{T3.4}, the desired result follows.
\end{proof}

Consider now the class of mappings $T:X\to X$ satisfying the inequality
\begin{equation}\label{CONT-APP4}
\begin{aligned}
&d(Tx,Ty)+d(Ty,Tz)+d(Tx,Tz)\\
&\leq a_1d(x,Tx)+a_2d(y,Ty)+a_3d(z,Tz)+a_4\left(d(x,y)+d(y,z)+d(x,z)\right)
\end{aligned}
\end{equation}
for all three pairwise distinct points $x,y,z \in X$, where $a_i$ ($i=1,2,3,4$) are nonnegative constants with $0<\sum_{i=1}^4a_i <1$.

\begin{corollary}\label{CR4.5}
Let $(X,d)$, $|X|\geq 3$, be a complete metric space and let the
mapping $T: X \to X$  satisfies the following two conditions:
\begin{itemize}
\item[{\rm{(I)}}] For all $x\in X$, $T(Tx)\neq x$, provided $Tx\neq x$;
\item[{\rm{(II)}}]	\eqref{CONT-APP4} holds for all three pairwise distinct points $x,y,z \in X$.
\end{itemize}
Then $\Fix(T)\neq\emptyset$ and $|\Fix(T)|\leq 2$. 
\end{corollary}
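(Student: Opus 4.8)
The plan is to reduce the statement to Theorem \ref{T3.7} by passing from the metric $d$ to the $G$-metric defined by \eqref{G-d}, exactly as in the proofs of Corollaries \ref{CR4.2}--\ref{CR4.4}. First I would invoke Lemma \ref{L2.1}: since $(X,d)$ is a complete metric space, the function $G(x,y,z)=d(x,y)+d(y,z)+d(x,z)$ is a complete $G$-metric on $X$, and $|X|\geq 3$ is preserved. Condition (I) of Corollary \ref{CR4.5} is literally condition (I) of Theorem \ref{T3.7}, so nothing is needed there. The real work is to rewrite the contraction hypothesis \eqref{CONT-APP4} in the form required by condition (II) of Theorem \ref{T3.7}.

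The key identity is that for this choice of $G$ one has $G(w,Tw,Tw)=d(w,Tw)+d(Tw,Tw)+d(w,Tw)=2\,d(w,Tw)$ for every $w\in X$, and likewise $G(Tx,Ty,Tz)=d(Tx,Ty)+d(Ty,Tz)+d(Tx,Tz)$ and $G(x,y,z)=d(x,y)+d(y,z)+d(x,z)$. Substituting these into \eqref{CONT-APP4}, the inequality becomes
\[
G(Tx,Ty,Tz)\leq \frac{a_1}{2}\,G(x,Tx,Tx)+\frac{a_2}{2}\,G(y,Ty,Ty)+\frac{a_3}{2}\,G(z,Tz,Tz)+a_4\,G(x,y,z)
\]
for all pairwise distinct $x,y,z\in X$. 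Thus condition (II) of Theorem \ref{T3.7} holds with the constants $b_1=\frac{a_1}{2}$, $b_2=\frac{a_2}{2}$, $b_3=\frac{a_3}{2}$, $b_4=a_4$. These are nonnegative, and $\sum_{i=1}^4 b_i=\frac{a_1+a_2+a_3}{2}+a_4\leq a_1+a_2+a_3+a_4<1$, while the sum is positive because $\sum a_i>0$ forces at least one $a_i>0$. Hence $0<\sum_{i=1}^4 b_i<1$, and all hypotheses of Theorem \ref{T3.7} are met. Applying that theorem yields $\Fix(T)\neq\emptyset$ and $|\Fix(T)|\leq 2$, which is the assertion.

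I do not anticipate a genuine obstacle here: the argument is a verification, and the only point requiring a moment's care is checking that the new coefficient sum stays strictly below $1$ and strictly above $0$ after halving three of the four constants — which it does, since halving can only decrease the sum and the $a_4$ term together with positivity of $\sum a_i$ keeps it positive. One should also note that the $G$-metric \eqref{G-d} is indeed a $G$-metric (this is recalled as a prototype example in Section \ref{sec2}), so Theorem \ref{T3.7} genuinely applies to $(X,G)$.
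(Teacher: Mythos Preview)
Your proposal is correct and follows essentially the same route as the paper: you pass to the $G$-metric \eqref{G-d}, use Lemma \ref{L2.1} for completeness, rewrite \eqref{CONT-APP4} via $G(w,Tw,Tw)=2d(w,Tw)$ to obtain the Reich-type inequality with coefficients $\frac{a_1}{2},\frac{a_2}{2},\frac{a_3}{2},a_4$, and then apply Theorem \ref{T3.7}. Your extra care in checking $0<\sum b_i<1$ is a welcome detail that the paper leaves implicit.
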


\begin{proof}
Consider the $G$-metric defined by \eqref{G-d}. By Lemma \ref{L2.1}, since $(X,d)$ is a complete metric space, then $(X,G)$ is a complete $G$-metric space. Moreover, (\ref{CONT-APP4}) is equivalent to 
$$
G(Tx,Ty,Tz)\leq \dfrac{a_1}2G(x,Tx,Tx) + \dfrac{a_2}2 G(y,Ty,Ty) + \dfrac{a_3}2 G(z,Tz,Tz) + a_4 G(x,y,z) 
$$	
for all three pairwise distinct points $x,y,z \in X$. 	Then, since $\sum_{i=1}^4a_i<1$, applying Theorem \ref{T3.7}, the desired result follows.
\end{proof}

We finally consider the class of mappings $T:X\to X$ satisfying the inequality
\begin{equation}\label{CONT-APP5}
\begin{aligned}
&\max\left\{d(Tx,Ty),d(Ty,Tz),d(Tx,Tz)\right\}\\
&\leq a_1d(x,Tx)+a_2d(y,Ty)+a_3d(z,Tz)+a_4\max\left\{d(x,y),d(y,z),d(x,z)\right\}
\end{aligned}
\end{equation}
for all three pairwise distinct points $x,y,z \in X$, where $a_i$ ($i=1,2,3,4$) are nonnegative constants with $0<\sum_{i=1}^4a_i <1$.

\begin{corollary}\label{CR4.5}
Let $(X,d)$, $|X|\geq 3$, be a complete metric space and let the
mapping $T: X \to X$  satisfies the following two conditions:
\begin{itemize}
\item[{\rm{(I)}}] For all $x\in X$, $T(Tx)\neq x$, provided $Tx\neq x$;
\item[{\rm{(II)}}]	\eqref{CONT-APP5} holds for all three pairwise distinct points $x,y,z \in X$.
\end{itemize}
Then $\Fix(T)\neq\emptyset$ and $|\Fix(T)|\leq 2$. 
\end{corollary}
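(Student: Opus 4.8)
The plan is to mirror the proofs of Corollaries \ref{CR4.2}--\ref{CR4.4}, but with the max-type $G$-metric \eqref{G2-d} and Lemma \ref{L2.2} playing the roles that \eqref{G-d} and Lemma \ref{L2.1} played there. First I would equip $X$ with the $G$-metric
$$
G(u,v,w)=\max\{d(u,v),d(v,w),d(u,w)\},\qquad u,v,w\in X,
$$
which is the prototype \eqref{G2-d}. Since $(X,d)$ is complete, Lemma \ref{L2.2} immediately yields that $(X,G)$ is a complete $G$-metric space, and the cardinality hypothesis $|X|\geq 3$ is inherited unchanged, so the standing assumptions of Theorem \ref{T3.7} are met.

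Next I would translate hypothesis (II). The crucial (and completely elementary) observation is that for this particular $G$-metric one has the exact identities $G(x,Tx,Tx)=\max\{d(x,Tx),d(Tx,Tx),d(x,Tx)\}=d(x,Tx)$ for every $x\in X$ (and similarly in the variables $y$ and $z$), while $G(Tx,Ty,Tz)=\max\{d(Tx,Ty),d(Ty,Tz),d(Tx,Tz)\}$ and $G(x,y,z)=\max\{d(x,y),d(y,z),d(x,z)\}$. Substituting these into \eqref{CONT-APP5} shows that it is equivalent to
$$
G(Tx,Ty,Tz)\leq a_1 G(x,Tx,Tx)+a_2 G(y,Ty,Ty)+a_3 G(z,Tz,Tz)+a_4 G(x,y,z)
$$
for all pairwise distinct $x,y,z\in X$, with exactly the same nonnegative constants $a_1,a_2,a_3,a_4$ satisfying $0<\sum_{i=1}^4 a_i<1$. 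Thus condition (II) of Theorem \ref{T3.7} holds verbatim. Condition (I) of Theorem \ref{T3.7} is literally condition (I) of the present corollary, so there is nothing to check there. Applying Theorem \ref{T3.7}, we conclude that $\Fix(T)\neq\emptyset$ and $|\Fix(T)|\leq 2$.

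I do not expect a genuine obstacle in this argument. In contrast with the corollaries built on \eqref{G-d} --- where passing between $d$ and $G$ inserts a harmless factor $2$ on the Kannan/Reich terms (hence the $\lambda/2$ in Corollary \ref{CR4.4} and the $a_i/2$ in the corresponding Reich corollary) --- the max-type $G$-metric reproduces $d(x,Tx)$ on the nose, so no rescaling of the constants is needed and the smallness condition $\sum_{i=1}^4 a_i<1$ is preserved effortlessly. The only items deserving a line of justification are the elementary identities displayed above and the invocation of Lemma \ref{L2.2} (rather than Lemma \ref{L2.1}) for the completeness transfer.
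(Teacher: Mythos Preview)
Your proposal is correct and follows essentially the same route as the paper: equip $X$ with the max-type $G$-metric \eqref{G2-d}, invoke Lemma \ref{L2.2} for completeness, observe that $G(x,Tx,Tx)=d(x,Tx)$ so that \eqref{CONT-APP5} becomes exactly the inequality in condition (II) of Theorem \ref{T3.7} with the same constants, and conclude. Your extra remark that no rescaling of the $a_i$ is needed here (unlike in the corollaries based on \eqref{G-d}) is accurate and makes explicit something the paper leaves implicit.
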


\begin{proof}
Consider the $G$-metric defined by \eqref{G2-d}. By Lemma \ref{L2.2}, since $(X,d)$ is a complete metric space, then $(X,G)$ is a complete $G$-metric space. Moreover, (\ref{CONT-APP5}) is equivalent to 
$$
G(Tx,Ty,Tz)\leq a_1G(x,Tx,Tx) + a_2 G(y,Ty,Ty) + a_3 G(z,Tz,Tz) + a_4 G(x,y,z)
$$	
for all three pairwise distinct points $x,y,z \in X$. 	Then, applying Theorem \ref{T3.7}, the desired result follows.
\end{proof}

\subsection*{Data availability statement}
No new data were created or analyzed in this study.

\subsection*{Conflict of interest}
The authors declare that they have no competing interests.

\subsection*{Funding}
The third author  is supported by Researchers Supporting Project number (RSP2024R4), King Saud University, Riyadh, Saudi Arabia.

\subsection*{Authors' contributions}
All authors contributed equally and significantly in writing this article. All authors read and approved the final manuscript.

\end{document}